\newtheorem{theorem}{Theorem}[section]
\newtheorem{proposition}[theorem]{Proposition}
\newtheorem{corollary}[theorem]{Corollary}
\theoremstyle{definition}
\newtheorem{remark}[theorem]{Remark}
\newcommand{\N}{\mathbb{N}}
\title{A simple observation about compactness and fast decay of Fourier coefficients}
\author{J. M. Almira}
\begin{document}

\keywords{Fourier coefficients, Frame coefficients, Continuous linear functionals, Compactness in Banach spaces}
\subjclass[2000]{41A16, 46B50  }
 \baselineskip=16pt

\numberwithin{equation}{section}

\maketitle \markboth{Compactness and fast decay of Fourier coefficients}{J. M. Almira}

\begin{abstract}
Let $X$ be a Banach space and suppose $Y\subseteq X$ is a Banach space compactly embedded into $X$, and $(a_k)$ is a weakly null sequence of
functionals in $X^*$. Then there exists a sequence $\{\varepsilon_n\} \searrow 0$
such that $|a_n(y)| \leq \varepsilon_n \|y\|_Y$ for every $n\in\mathbb{N}$ and every $y\in Y$. We prove this result and we use it for the study of fast decay of Fourier coefficients in $L^p(\mathbb{T})$ and frame coefficients in the Hilbert setting.
\end{abstract}

\section{Motivation}  One of the classical problems in Harmonic Analysis is the study of the relationship that exists between decay properties of the Fourier coefficients $c_n(f)=\frac{1}{2\pi}\int_0^{2\pi}f(t)e^{-int}dt$ of a $2\pi$-periodic function $f:\mathbb{T}\to\mathbb{C}$  and its membership to several function spaces. Just to mention a few well known examples, we show the following list of results:
\begin{itemize}
\item Riemann-Lebesgue Lemma sates that for $f\in L^1(\mathbb{T})$ the Fourier coefficients satisfy $\lim_{n\to \pm\infty}|c_n(f)|=0$.
\item Parseval's identity states that  $f\in L^2(\mathbb{T})$ if and only if $\{c_n(f)\}\in \ell^2(\mathbb{Z})$.
\item For $1<p\leq 2$, Hausdorff-Young's inequality states that if   $f\in L^p(\mathbb{T})$ then $\{c_n(f)\}\in \ell^q(\mathbb{Z})$, where $\frac{1}{p}+\frac{1}{q}=1$, and $\|\{c_n(f)\}\|_{\ell^q(\mathbb{Z})}\leq \|f\|_{L^p(\mathbb{T})}$.
\item If $p>2$ and $f\in L^p(\mathbb{T})$, then $f\in L^2(\mathbb{T})$ and  $\{c_n(f)\}\in \ell^2(\mathbb{Z})$.
\item De Leeuw, Kahane and Katznelson \cite[Theorem 2.1, page 278]{katznelson} proved that for any sequence $\{c_n\}\in \ell^2(\mathbb{Z})$ there exists a continuous function $f\in\mathbf{C}(\mathbb{T})$ such that $|c_n(f)|\geq c_n$ for all $n\in\mathbb{Z}$.
\end{itemize}
Moreover, other related results stand up the relationship that exists between fast decay of Fourier coefficients of a function $f$ and its smoothness properties:
\begin{itemize}
\item If $f$ is absolutely continuous and periodic then $c_n(f)=\mathbf{o}(1/n)$.
\item If $f$ is periodic, $k$-times differentiable and $f^{(k-1)}$ is absolutely continuous then $c_n(f)=\mathbf{o}(1/n^k)$.
\item If $f$ is of bounded variation on $\mathbb{T}$ then $c_n(f)\leq \frac{V_{[0,2\pi]}(f)}{2\pi |n|}$ for all $n\neq 0$.
\item If $f\in \mathbf{Lip}_{\alpha}(\mathbb{T})$ then $c_n(f)=\mathbf{O}(n^{-\alpha})$.
 \end{itemize}
In this note we prove a quite general result about functionals which implies that, for $1\leq p<\infty$, associated to any Banach space $Y$ compactly embedded into $L^p(\mathbb{T})$ there is a decreasing sequence $\{\varepsilon_n\} \searrow 0$
such that $|c_n(y)| \leq \varepsilon_n \|y\|_Y$ for all $n\in\mathbb{Z}$ and $y\in Y$.  We also prove an analogous result for frames in the Hilbert space setting.

\section{The main result}
\begin{proposition} Let $X$ be a Banach space and suppose $Y\subseteq X$ is a Banach space compactly embedded into $X$, and $(a_k)$ is a weakly null sequence of
functionals in $X^*$. Then there exists a sequence $\{\varepsilon_n\} \searrow 0$
such that $|a_n(y)| \leq \varepsilon_n \|y\|_Y$ for every $n\in\mathbb{N}$ and every $y\in Y$.
\end{proposition}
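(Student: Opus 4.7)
The natural candidate for the bound is $\eta_n := \sup_{\|y\|_Y \leq 1} |a_n(y)|$, which is precisely the operator norm of $a_n$ restricted to $Y$ viewed in $Y^*$. Since $Y$ embeds continuously into $X$, each $\eta_n$ is finite, and the task reduces to showing $\eta_n \to 0$; once that is in hand, the sequence $\varepsilon_n := \sup_{k \geq n} \eta_k$ is well-defined, decreasing, null, and dominates $\eta_n$, giving the required $|a_n(y)| \leq \varepsilon_n \|y\|_Y$.

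To show $\eta_n \to 0$, the plan is to combine two ingredients. First, since $(a_k)$ is weakly null in $X^*$, applying the evaluation functionals in $X^{**}$ shows that $a_n(x) \to 0$ for every fixed $x \in X$; moreover the uniform boundedness principle ensures $M := \sup_n \|a_n\|_{X^*} < \infty$. Second, the compact embedding $Y \hookrightarrow X$ means that the closed unit ball $B_Y$ of $Y$, considered as a subset of $X$, is relatively compact, and in particular totally bounded in $X$.

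The key step is then the standard fact that a bounded sequence in $X^*$ which tends to zero pointwise converges to zero uniformly on totally bounded subsets of $X$. Given $\varepsilon > 0$, I would choose a finite $\varepsilon/(2M)$-net $y_1, \dots, y_k \in B_Y$ for $B_Y$ in $X$, pick $N$ so that $|a_n(y_j)| < \varepsilon/2$ for all $j$ and all $n \geq N$, and then split $a_n(y) = a_n(y - y_j) + a_n(y_j)$ for the closest $y_j$ to $y$; the first summand is bounded by $M \cdot \varepsilon/(2M) = \varepsilon/2$ using the $X^*$-norm bound on $a_n$. This yields $\eta_n \leq \varepsilon$ for $n \geq N$.

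The only point that requires any care is the passage from weak convergence in $X^*$ to pointwise convergence on $X$, which is completely automatic, and the observation that ``compactly embedded'' gives relative compactness of $B_Y$ in $X$ (so total boundedness in $X$), not merely boundedness. No serious obstacle is expected; the argument is essentially a quantitative version of the classical principle that weak-star (or here weak) convergence of functionals becomes uniform on compact sets.
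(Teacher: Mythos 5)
Your proposal is correct and follows essentially the same route as the paper: Banach--Steinhaus for the uniform bound $M$, total boundedness of the unit ball of $Y$ in $X$ from the compact embedding, pointwise (weak) convergence on a finite net, and the triangle-inequality split $a_n(y)=a_n(y-y_j)+a_n(y_j)$. Your reformulation via $\eta_n=\|(a_n)_{|Y}\|_{Y^*}\to 0$ is exactly the equivalence noted in the paper's remark, so there is nothing genuinely different to compare.
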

\begin{proof} It follows from Banach-Steinhaus Theorem and the hypothesis of $(a_n)$ being weakly null, that $\sup_n \|a_n\|=M<\infty$. For each $m \in \N$, find a finite set
$S_m \subset \{y \in Y : \|y\|_Y \leq 1\}$, such that, for every $y \in Y$
with $\|y\|_Y \leq 1$, there exists $z \in S_m$ with $\|y - z\|_X < 1/(2Mm)$.
Then there exists a sequence $N_1 < N_2 < \ldots$ such that
$|a_k(z)| < 1/(2m)$ for any $z \in S_m$ and $k \geq N_m$. By the triangle inequality,
$|a_k(y)| < 1/m$ for any $k \geq N_m$, and any $y$ in the unit ball of $Y$.
Now define $\varepsilon_i = 1/m$ for $N_m \leq i < N_{m+1}$.
For $i < N_1$, let $\varepsilon_i = M$. We have shown that
$|a_i(y)| < \varepsilon_i \|y\|_Y$ for any $i$.
\end{proof}
\begin{remark} Note that  existence of  $\{\varepsilon_n\} \searrow 0$
such that $|a_n(y)| \leq \varepsilon_n \|y\|_Y$ for every $n\in\mathbb{N}$ and every $y\in Y$ is equivalent to claim that
$\lim_{n\to\infty} \|(a_n)_{|Y}\|_{Y^*}=0$. \end{remark}
\begin{corollary} Let $1\leq p<\infty$ and let $Y$ be a Banach space compactly embedded into $L^p(\mathbb{T})$. Then there exists a decreasing sequence $\{\varepsilon_n\} \searrow 0$
such that $|c_n(y)| \leq \varepsilon_n \|y\|_Y$ for all $n\in\mathbb{Z}$ and $y\in Y$.
\end{corollary}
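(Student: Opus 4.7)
The plan is to apply the proposition directly, taking $X=L^p(\mathbb{T})$ and $(a_n)$ to be a re-enumeration (via some bijection $\sigma:\N\to\Z$ with $|\sigma(n)|\to\infty$, for example $\sigma(1)=0$, $\sigma(2j)=j$, $\sigma(2j+1)=-j$) of the Fourier coefficient functionals $(c_k)_{k\in\Z}$. Two things need to be verified as hypotheses: that each $c_k\in(L^p(\mathbb{T}))^*$ with norm bounded independently of $k$, and that the resulting sequence $(c_{\sigma(n)})$ is weakly null.

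The first is a one-line consequence of H\"older's inequality on the finite-measure torus: $|c_k(f)|\le\tfrac{1}{2\pi}\|f\|_{L^1(\mathbb{T})}\le(2\pi)^{-1/p}\|f\|_{L^p(\mathbb{T})}$, uniformly in $k$. The second rests on the Riemann--Lebesgue lemma: since $L^p(\mathbb{T})\subset L^1(\mathbb{T})$, one has $c_k(f)\to 0$ as $|k|\to\infty$ for every $f\in L^p(\mathbb{T})$, and the growth of $|\sigma(n)|$ then gives $c_{\sigma(n)}(f)\to 0$ for each such $f$. For $1<p<\infty$, reflexivity of $L^p(\mathbb{T})$ automatically converts this pointwise convergence of a bounded sequence in $(L^p)^*$ into weak convergence, so the proposition applies and delivers $\tilde\varepsilon_n\searrow 0$ with $|c_{\sigma(n)}(y)|\le\tilde\varepsilon_n\|y\|_Y$. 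Pulling the bound back through $\sigma$ and taking, e.g., $\varepsilon_k:=\sup_{|j|\ge|k|}\tilde\varepsilon_{\sigma^{-1}(j)}$ produces a sequence that is decreasing in $|k|$ and satisfies $|c_k(y)|\le\varepsilon_k\|y\|_Y$ for every $k\in\Z$ and $y\in Y$, which is the conclusion sought.

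The single delicate point is the endpoint $p=1$, where weak nullity in $(L^1(\mathbb{T}))^*=L^\infty(\mathbb{T})$ is formally stronger than the pointwise nullity on $L^1$ furnished by Riemann--Lebesgue. Inspection of the proof of the proposition shows, however, that the hypothesis ``weakly null'' is never used in full strength: only pointwise convergence $a_n(x)\to 0$ for $x\in X$ is actually invoked (Banach--Steinhaus extracts the uniform norm bound from pointwise boundedness, and the $\varepsilon$-net step requires $a_n(z)\to 0$ only on the \emph{finite} set $S_m$). Consequently the argument passes through the case $p=1$ verbatim, so no genuine obstacle is present and the corollary holds uniformly for $1\le p<\infty$.
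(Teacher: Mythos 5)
Your proof is correct and follows essentially the same route as the paper: H\"older's inequality gives the uniform bound on the functionals $c_k$ on $L^p(\mathbb{T})$, the Riemann--Lebesgue lemma gives their nullity on each $f\in L^p(\mathbb{T})$, and then the Proposition is applied. Your additional care---re-enumerating $\mathbb{Z}$ by $\mathbb{N}$ and observing that for $p=1$ only pointwise (weak-$*$) nullity is available but that this is all the Proposition's proof actually uses---tightens a point the paper glosses over when it simply asserts that $\{c_n\}$ is weakly null.
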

\begin{proof} H\"{o}lder's inequality implies that, for each $n\in\mathbb{Z}$, the coefficient functionals $c_n:L^p(\mathbb{T})\to\mathbb{C}$,  $c_n(f)=\frac{1}{2\pi}\int_0^{2\pi}f(t)e^{-int}dt$, are well defined and uniformly bounded with norm $\|c_n\|\leq 1$. Moreover, the results stated at the introductory section of this note show that  the sequence $\{c_n\}$ is weakly null.
\end{proof}
\begin{corollary}Let $H$ be a Hilbert space and let $\{\phi_n\}_{n=0}^{\infty}$ be a frame on $H$ with constants $A,B>0$. Then for every subspace $Y$ compactly embedded into $H$ there exists a decreasing sequence  $\{\varepsilon_n\} \searrow 0$
such that $|\langle y,\phi_n\rangle| \leq \varepsilon_n \|y\|_Y$ for every $n\in\mathbb{N}$ and $y\in Y$.
\end{corollary}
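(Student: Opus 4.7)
The plan is to reduce the statement to the main Proposition by taking $X=H$, keeping $Y$ as given, and defining the functionals $a_n\in H^*$ by $a_n(y)=\langle y,\phi_n\rangle$. Two things then have to be checked before the Proposition can be invoked: that each $a_n$ is a bounded linear functional on $H$, and that the sequence $(a_n)$ is weakly null in $H^*$.

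First I would handle boundedness. By Cauchy--Schwarz, $a_n$ is clearly a continuous linear functional with $\|a_n\|_{H^*}=\|\phi_n\|_H$. In fact the upper frame inequality applied to $y=\phi_n$ gives
\[
\|\phi_n\|^4=|\langle \phi_n,\phi_n\rangle|^2\leq \sum_{k=0}^{\infty}|\langle \phi_n,\phi_k\rangle|^2\leq B\,\|\phi_n\|^2,
\]
so $\|a_n\|_{H^*}\leq \sqrt{B}$, and the family is uniformly bounded.

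Next, the weak nullness. Since $H$ is a Hilbert space, the Riesz isomorphism identifies $H^*$ with $H$ and weak convergence in $H^*$ with weak convergence of the representers $\phi_n$ in $H$. But the upper frame bound gives, for every fixed $y\in H$,
\[
\sum_{n=0}^{\infty}|\langle y,\phi_n\rangle|^2\leq B\,\|y\|_H^2<\infty,
\]
which forces $\langle y,\phi_n\rangle\to 0$ as $n\to\infty$. Thus $\phi_n\rightharpoonup 0$ weakly in $H$, equivalently $a_n\to 0$ weakly in $H^*$.

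With these ingredients in place the Proposition applies verbatim to $X=H$, $Y\subseteq H$ (compactly embedded by hypothesis), and the weakly null sequence $(a_n)\subset H^*$, yielding a decreasing sequence $\{\varepsilon_n\}\searrow 0$ with $|a_n(y)|=|\langle y,\phi_n\rangle|\leq \varepsilon_n\|y\|_Y$ for every $n\in\mathbb{N}$ and every $y\in Y$. The only point requiring a little care is the passage from the summability $\sum_n|\langle y,\phi_n\rangle|^2<\infty$ to weak nullness in $H^*$; everything else is immediate from the frame definition and the previously proved Proposition.
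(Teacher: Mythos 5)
Your proof is correct and follows essentially the same route as the paper: uniform boundedness of the coefficient functionals via the upper frame bound and weak nullness from the square-summability of $\{\langle y,\phi_n\rangle\}$, followed by an application of the Proposition. You merely spell out in more detail (Riesz identification, reflexivity making weak and weak* convergence coincide) what the paper asserts in one line.
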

\begin{proof} By definition of frame, we have that, for all $x\in H$, $$A\|x\|_H^2\leq \sum_{n=0}^{\infty}|\langle x,\phi_n\rangle|^2\leq B\|x\|_H^2. $$
This shows, in particular, that the sequence of coefficient functionals $c_n:H\to\mathbb{C}$, $c_n(x)=<x,\phi_n>$ is uniformly bounded with norm $\|c_n\|\leq \sqrt{B}$, and weakly null. \end{proof}

\bigskip

\footnotesize{J. M. Almira

Departamento de Matem\'{a}ticas. Universidad de Ja\'{e}n.

E.P.S. Linares,  C/Alfonso X el Sabio, 28

23700 Linares (Ja\'{e}n) Spain

email: jmalmira@ujaen.es}

\end{document}